\newcommand{\bydef}{:=}
\newcommand{\cA}{\mathcal{A}}%algebras
\newcommand{\cE}{{\mathcal E}}
\newcommand{\cI}{\mathcal{I}}
\newcommand{\cJ}{\mathcal{J}}
\newcommand{\cN}{{\mathcal N}}
\newcommand{\espan}[1]{\mathrm{span}\left\{#1\right\}}
\newcommand{\NN}{{\mathbb N}}
\newcommand{\FF}{\mathbb{F}}
\DeclareMathOperator{\Aut}{\mathrm{Aut}}%automorphism group
\DeclareMathOperator{\supp}{\mathrm{supp}\,}
\DeclareMathOperator{\ann}{ann}
\newcommand{\GL}{\mathrm{GL}}
\newtheorem{theorem}{Theorem}[section]
\newtheorem{proposition}[theorem]{Proposition}
\newtheorem{lemma}[theorem]{Lemma}
\newtheorem{corollary}[theorem]{Corollary}
\theoremstyle{definition}
\newtheorem{df}[theorem]{Definition}
\newtheorem{example}[theorem]{Example}
\theoremstyle{remark}
\newtheorem{remark}[theorem]{Remark}
\newenvironment{romanenumerate}
 {\begin{enumerate}
 
 }{\end{enumerate}}
\begin{document}

\title{Evolution  algebras and Graphs}

\author[Alberto Elduque]{Alberto Elduque$^{\star}$}
\thanks{$^{\star}$Supported by the Spanish Ministerio de Educaci\'on y Ciencia and FEDER (MTM 2010-18370-C04-02) and by the Diputaci\'on General de Arag\'on (Grupo de Investigaci\'on de \'Algebra).
Part of this research was done while this author was visiting the Departamento de Matem\'aticas, Facultad de Ciencias, Universidad de
Chile, supported by the FONDECYT grant 1120844.}
%Supported by the Spanish Ministerio de Educaci\'{o}n y Ciencia and
%FEDER (MTM 2010-18370-C04-02) and by the Diputaci\'on General de Arag\'on (Grupo de Investigaci\'on de \'Algebra)}
\address{Departamento de Matem\'aticas e Instituto Universitario de Matem\'aticas y Aplicaciones,
Universidad de Zaragoza, 50009 Zaragoza, Spain
}
\email{elduque@unizar.es}

\author[Alicia Labra]{Alicia Labra$^{\star\star}$}
\thanks{$^{\star\star}$Supported by FONDECYT 1120844.}
\address{Departamento de Matem\'aticas,
Facultad de Ciencias, Universidad de Chile.  Casilla 653, Santiago, Chile}
\email{alimat@uchile.cl}

%\date{December 16, 2013}

%\subjclass[2010]{Primary 17B70, secondary 17B40, 16W50.}

%\keywords{Graded algebra, fine grading, Weyl group, simple Lie algebra}

\begin{abstract}
A digraph is attached to any evolution algebra. This graph leads to some new purely algebraic results on this class of algebras and allows for some new natural proofs of known results.
Nilpotency of an evolution algebra will be proved to be equivalent to the nonexistence of oriented cycles in the graph. Besides, the automorphism group of any evolution algebra $\cE$ with $\cE=\cE^2$ will be shown to be always finite.
\end{abstract}

\maketitle

%------------------------------------------------------------

\section{Introduction}

Evolution algebras were introduced in 2006 by Tian and Vojtechovsky in their paper ``Mathematical
concepts of evolution algebras in non-Mendelian genetics " (see \cite{TV}).
Later on, Tian laid the foundations of evolution algebras in his monograph
\cite{T}.
These algebras present many connections with other
mathematical fields including graph theory, group theory, Markov chains, dynamical systems, knot theory,
$3$-manifolds and the study of the Riemann-Zeta function (see \cite{T}).

Evolution algebras are not defined by identities, and hence they do not form a variety of non-associative algebras, like Lie, alternative or Jordan algebras. Therefore, the research on these algebras follows different paths (see \cite{CGOT}, \cite{CLOR1}, \cite{CLOR2}, \cite{RT}, \cite{TZ}).

In \cite[\S 6.1]{T}, Tian defined an evolution algebra associated to any digraph (directed graph). This lead him to study properties of what he called \emph{graphicable algebras}.

Here we will go in the reverse direction. Given any evolution algebra and a natural basis for it, a digraph with a weight assigned to its edges (weighted digraph) will be defined. If one forgets about the weights, this gives just a digraph. Properties of the evolution algebra will be related to corresponding properties in the digraph. The use of these properties simplifies and sheds some new light on existing results, and it allows for the discovery of new purely algebraic results for these algebras.

\smallskip

To simplify notations, all the graphs considered here will be directed graphs. These are pairs $(V,E)$ consisting on a set of vertices $V$, and a set of edges (or arrows) $E$, which is a subset of the cartesian product $V\times V$.

All our algebras will be defined over an arbitrary ground field $\FF$ and will have finite dimension. By algebra we mean just a vector space $\cA$ over our ground field $\FF$ endowed with a bilinear multiplication $\cA\times\cA\rightarrow \cA:\ (x,y)\mapsto xy$.

\smallskip

In Section \ref{se:evolution_and_graphs} the graph and weighted graph attached to any natural basis of an evolution algebra will be defined and some examples will be given. The geometrical condition of the graph being connected will be related to the condition on the algebra to split into a direct sum of simple ideals.

Then in Section \ref{se:nilpotency}, the result on nilpotency of evolution algebras in \cite[Theorem 2.7]{CLOR1} will be revised and expanded. The main result is that nilpotency can be read from the graph (see Theorem \ref{th:nilpotency} and Remark \ref{re:nilpotency_graph}): an evolution algebra is nil, or nilpotent, if and only if the graph contains no oriented cycle. This point of view simplifies too some of the arguments in \cite{CLOR1}.

Finally, in Section \ref{se:automorphisms} we will deal with automorphisms of evolution algebras. These have been considered in \cite{CGOT}. The main result asserts that the automorphism group of any evolution algebra $\cE$ such that $\cE=\cE^2$ is finite. An example will be given to show that the condition $\cE=\cE^2$ is necessary.

%---------------------------------
\bigskip

\section{Evolution algebras and Graphs}\label{se:evolution_and_graphs}

\begin{df}\cite{T}\label{df:evolution}
An  \emph{evolution algebra} is an algebra $\cE$ containing a basis (as a vector space) $B=\{e_1, \cdots, e_n\}$ such that $e_ie_j=0$ for any $1\leq i<j\leq n$. A basis with this property is called a \emph{natural basis}.
\end{df}

By its own definition, any evolution algebra is commutative.

Given a natural basis $B=\{e_1,\ldots,e_n\}$ of an evolution algebra $\cE$,
\[
e_i^2=\sum_{j=1}^n\alpha_{ij}e_j
\]
for some scalars $\alpha_{ij}\in \FF$, $1\leq i,j\leq n$. The matrix $A=\bigl(\alpha_{ij}\bigr)$ is the \emph{matrix of structural constants} of the evolution algebra $\cE$, relative to the natural basis $B$.

\smallskip

We define next the graph and weighted graph attached to an evolution algebra. Recall that our graphs are always directed graphs.

\begin{df}
Let $\cE$ be an evolution algebra with a natural basis $B=\{e_1,\ldots,e_n\}$ and matrix of structural constants $A=\bigl(\alpha_{ij}\bigr)$.
\begin{itemize}
\item The graph $\Gamma(\cE,B)=(V,E)$, with $V=\{1,\ldots,n\}$ and $E=\{(i,j)\in V\times V: \alpha_{ij}\ne 0\}$, is called the \emph{graph attached to the evolution algebra $\cE$ relative to the natural basis $B$}.
\item The triple $\Gamma^w(\cE,B)=(V,E,\omega)$, with $\Gamma(\cE,B)=(V,E)$ and where $\omega$ is the map $E\rightarrow \FF$ given by $\omega\bigl((i,j)\bigr)=\alpha_{ij}$, is called the \emph{weighted graph attached to the evolution algebra $\cE$ relative to the natural basis $B$}.
\end{itemize}
\end{df}

Let us see a few examples.

\begin{example}
Let $\cE$ be an evolution algebra with natural basis $B = \{e_1,e_2\}$ and multiplication given by $e_1^2 = \alpha e_1+ \beta e_2$, $e_2^2 = \gamma e_1 + \delta e_2$, with $\alpha,\beta,\gamma,\delta \in \FF^\times$. Then the graph $\Gamma^w(\cE, B)$ is
\[
\begin{tikzpicture}%
  %[->,auto=left,thick,every node/.style={circle,draw}]
[->,>=stealth',shorten >=1pt,auto,thick,every node/.style={circle,fill=blue!20,draw}]
  %\node (n6) at (1,10) {6};
  \node (n1) at (4,8)  {1};
  %\node (n5) at (8,9)  {5};
  \node (n2) at (7,8) {2};
  %\node (n2) at (9,6)  {2};
  %\node (n3) at (5,5)  {3};

  %\foreach \from/\to in {n6/n4,n4/n5,n5/n1,n1/n2,n2/n5,n2/n3,n3/n4}
  %  \draw (\from) -- (\to);
   \path[every node/.style={font=\sffamily\small}]
%  (n1) edge node [above] {0.6} (n4)
     % (n2)  edge [bend right] node[left] {0.3} (n1)
           (n2)  edge [bend left] node[below] {$\gamma$} (n1)
       (n2) edge [loop right] node {$\delta$} (n2)
        (n1)  edge  [bend left] node[above] {$\beta$} (n2)
       (n1) edge [loop left] node {$\alpha$} (n1);

\end{tikzpicture}
\]
\end{example}

\begin{example}\label{ex:2bases_graph}
Let $\cE$ be an evolution algebra with natural basis $B = \{e_1,e_2\}$ and multiplication given by
$e_1^2 = e_2$ and $e_2^2 = e_2$. Then the graph $\Gamma^w(\cE, B) $ is
\[
\begin{tikzpicture}%
  %[->,auto=left,thick,every node/.style={circle,draw}]
[->,>=stealth',shorten >=1pt,auto,thick,every node/.style={circle,fill=blue!20,draw}]
  \node (n1) at (4,8)  {1};
  \node (n2) at (7,8) {2};

   \path[every node/.style={font=\sffamily\small}]

       (n2) edge [loop right] node {$1$} (n2)
       (n1) edge node [above] {$1$} (n2);

\end{tikzpicture}
\]
Note that $(e_1 +e_2)(e_1 - e_2) = 0$ and hence, if the characteristic of $\FF$ is $\ne 2$, $B' = \{f_1 = e_1 +e_2, f_2 = e_1 - e_2\}$ is another natural basis of $\cE$, with $f_1^2 = f_1 -f_2$ and $f_2^2 = f_1 -f_2$. The weighted graph $\Gamma^w(\cE, B') $ relative to $B'$ is
\[
\begin{tikzpicture}%
  %[->,auto=left,thick,every node/.style={circle,draw}]
[->,>=stealth',shorten >=1pt,auto,thick,every node/.style={circle,fill=blue!20,draw}]
  \node (n1) at (4,8)  {1};
  \node (n2) at (7,8) {2};

   \path[every node/.style={font=\sffamily\small}]

           (n2)  edge [bend left] node[below] {$1$} (n1)
        edge [loop right] node {$-1$} (n2)
        (n1)  edge  [bend left] node[above] {$-1$} (n2)
       (n1) edge [loop left] node {$1$} (n1) ;

\end{tikzpicture}
\]
\end{example}

\begin{example}\label{ex:connected_notconnected}
Let $\cE$ be an evolution algebra and $B = \{e_1,e_2, e_3\}$ a natural basis with
$e_1^2 = e_2 + e_3$, $e_2^2 = 0$  and $e_3^2 = 0.$ Then the graph $\Gamma^w(\cE, B) $ is
\[
\begin{tikzpicture}%
  %[->,auto=left,thick,every node/.style={circle,draw}]
[->,>=stealth',shorten >=1pt,auto,thick,every node/.style={circle,fill=blue!20,draw}]

  \node (n1) at (4,4)  {1};
  \node (n2) at (6,5)  {2};
  \node (n3) at (6,3)  {3};

   \path[every node/.style={font=\sffamily\small}]

           (n1) edge node [above] {$1$} (n2)
       (n1) edge node [above] {$1$} (n3) ;

\end{tikzpicture}
\]
so $\Gamma(\cE,B)$ is a connected graph. However,
$B' = \{f_1 = e_1, f_2 =e_2 +e_3, f_3 = e_3\}$ is another natural basis of $\cE$ with $f_1^2 = f_2$ and  $f_2^2 = 0 = f_3^2$.
The new graph $\Gamma^w(\cE, B') $ is
\[
\begin{tikzpicture}%
  %[->,auto=left,thick,every node/.style={circle,draw}]
[->,>=stealth',shorten >=1pt,auto,thick,every node/.style={circle,fill=blue!20,draw}]

  \node (n1) at (4,4)  {1};
  \node (n2) at (6,4)  {2};
  \node (n3) at (8,4)  {3};

   \path[every node/.style={font=\sffamily\small}]

           (n1) edge node [above] {$1$} (n2) ;

\end{tikzpicture}
\]
so $\Gamma(\cE,B')$ is not connected!
\end{example}

\medskip

Contrary to the definition of \emph{evolution ideal} in \cite[\S 3.1.4]{T}, the word \emph{ideal} will carry here the usual meaning. That is,  an ideal $\cI$ of an evolution algebra $\cE$ is a subspace satisfying $\cE\cI\subseteq \cI$. No other condition will be imposed.

\begin{df}
An algebra $\cE$ is said to be \emph{decomposable} if there are nonzero ideals $\cI$ and  $\cJ$ such that $\cE = \cI \oplus \cJ$. Otherwise, $\cE$ is called \emph{indecomposable}.
\end{df}

Connectedness is related to indecomposability.

Given an evolution algebra $\cE$, consider its \emph{annihilator ideal} $\ann(\cE)\bydef\{x\in\cE: x\cE=0\}$.

\begin{lemma}\label{le:annihilator}
Let $B=\{e_1, \cdots, e_n\}$ be a natural basis of an evolution algebra $\cE$. Then
\[
\ann(\cE)  = \espan{e_i : e_i^2 = 0}.
\]
\end{lemma}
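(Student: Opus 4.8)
The plan is to prove the two inclusions separately, both by a one-line computation in the natural basis $B$. The single fact used throughout is that commutativity of $\cE$ upgrades the defining relations $e_ie_j=0$ for $i<j$ to $e_ie_j=0$ for \emph{all} $i\ne j$ (for $i>j$ one has $e_ie_j=e_je_i=0$ since $j<i$). Thus among products of basis vectors only the squares $e_i^2$ can be nonzero, and everything reduces to bookkeeping with scalars.

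For the inclusion $\espan{e_i : e_i^2=0}\subseteq\ann(\cE)$: if $e_i^2=0$, then $e_ie_j=0$ for every $j$ (the case $j=i$ is the hypothesis, the case $j\ne i$ is the observation above), so $e_i\cE=0$, i.e. $e_i\in\ann(\cE)$; since $\ann(\cE)$ is a subspace it contains the span of all such $e_i$.

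For the reverse inclusion, take an arbitrary $x=\sum_{i=1}^n\lambda_ie_i\in\ann(\cE)$. For each $j$,
\[
0 = xe_j = \sum_{i=1}^n\lambda_i\,e_ie_j = \lambda_j e_j^2,
\]
again because $e_ie_j=0$ for $i\ne j$. Hence $\lambda_j e_j^2=0$ for all $j$, and since the scalars lie in a field this forces $\lambda_j=0$ whenever $e_j^2\ne 0$. Therefore $x=\sum_{j:\,e_j^2=0}\lambda_j e_j\in\espan{e_i:e_i^2=0}$.

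There is essentially no obstacle in this argument; the only point needing the slightest attention is invoking commutativity at the outset to annihilate all off-diagonal products $e_ie_j$, after which both inclusions are immediate.
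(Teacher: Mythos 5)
Your proof is correct and follows exactly the same approach as the paper: both inclusions are verified by computing $xe_j=\lambda_je_j^2$ and using that commutativity forces $e_ie_j=0$ for all $i\ne j$. Nothing to add.
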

\begin{proof}
If $ 0 \neq x = \alpha_1 e_1 + \cdots  + \alpha_n e_n $ is an element such that $x \cE = 0$, for any $i$ with  $ \alpha_i \neq 0 $,  $ 0 = xe_i = \alpha_i e_i^2$, so $e_i^2=0$.

Conversely, if  $e_i^2 = 0$ for some $i$, then $e_i \cE = 0$, because  $e_i e_j   =0$ for any $j\ne i$.
\end{proof}

\begin{proposition}\label{pr:indecomposable_connected}
Let $\cE$ be an evolution algebra with natural basis $B=\{e_1,\ldots,e_n\}$ and such that $\ann(\cE) = 0$.
Then $\cE$ is indecomposable if and only if $\Gamma(\cE, B)$ is connected.
\end{proposition}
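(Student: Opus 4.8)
The plan is to prove both implications by contraposition. For ``$\cE$ indecomposable $\Rightarrow\Gamma(\cE,B)$ connected'', suppose $\Gamma(\cE,B)$ is disconnected. Then the vertex set splits as a disjoint union $V=V_1\sqcup V_2$ of two nonempty sets with no edge, in either direction, joining $V_1$ to $V_2$. Put $\cI=\espan{e_i:i\in V_1}$ and $\cJ=\espan{e_i:i\in V_2}$, so $\cE=\cI\oplus\cJ$ as vector spaces. Each is an ideal: for $i\in V_1$ we have $e_ke_i=0$ for $k\ne i$, while $e_i^2=\sum_j\alpha_{ij}e_j$ and $\alpha_{ij}\ne0$ forces $(i,j)\in E$, hence $j\in V_1$; thus $\cE e_i=\FF e_i^2\subseteq\cI$, and symmetrically $\cE\cJ\subseteq\cJ$. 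Therefore $\cE$ is decomposable. Note that this half does not use the hypothesis $\ann(\cE)=0$.

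For the converse, ``$\Gamma(\cE,B)$ connected $\Rightarrow\cE$ indecomposable'', assume $\cE=\cI\oplus\cJ$ with $\cI,\cJ$ nonzero ideals; I will show $\Gamma(\cE,B)$ is disconnected. Since $\cE$ is commutative, $\cI\cJ\subseteq\cI\cap\cJ=0$. Write $e_i=u_i+v_i$ with $u_i\in\cI$ and $v_i\in\cJ$; as the projection $\cE\to\cI$ along $\cJ$ is surjective and sends $e_i\mapsto u_i$, the set $\{u_1,\ldots,u_n\}$ spans $\cI$, and likewise $\{v_1,\ldots,v_n\}$ spans $\cJ$. For $i\ne j$, expanding $e_ie_j$ and discarding the cross terms (which lie in $\cI\cJ=0$) gives $e_ie_j=u_iu_j+v_iv_j$ with $u_iu_j\in\cI$ and $v_iv_j\in\cJ$; since $e_ie_j=0$, comparing components yields $u_iu_j=v_iv_j=0$.

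The crucial step, and the only place where $\ann(\cE)=0$ is used, is to prove that each $u_i$ and each $v_i$ is a scalar multiple of $e_i$. Write $u_i=\sum_k\lambda_{ik}e_k$. For $j\ne i$ we have, on one hand, $u_ie_j=u_iu_j=0$ (expanding $e_j=u_j+v_j$, using $\cI\cJ=0$ and the relation just obtained) and, on the other hand, $u_ie_j=\sum_k\lambda_{ik}(e_ke_j)=\lambda_{ij}e_j^2$ because $B$ is a natural basis; hence $\lambda_{ij}e_j^2=0$. By Lemma \ref{le:annihilator}, $\ann(\cE)=0$ means $e_j^2\ne0$ for every $j$, so $\lambda_{ij}=0$ for all $j\ne i$, i.e.\ $u_i=\lambda_{ii}e_i$; similarly $v_i=\mu_ie_i$. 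From $e_i=u_i+v_i$ we get $\lambda_{ii}+\mu_i=1$, so at least one of $\lambda_{ii},\mu_i$ is nonzero, and they are not both nonzero since otherwise $e_i\in\cI\cap\cJ=0$. Setting $V_\cI=\{i:u_i\ne0\}$ and $V_\cJ=\{i:v_i\ne0\}$, we therefore have $V=V_\cI\sqcup V_\cJ$, both parts nonempty because $\cI,\cJ\ne0$, and $\cI=\espan{e_i:i\in V_\cI}$, $\cJ=\espan{e_i:i\in V_\cJ}$. Finally, for $i\in V_\cI$ we have $e_i\in\cI$, hence $e_i^2\in\cI$, which forces $\alpha_{ij}=0$ whenever $j\in V_\cJ$; symmetrically there is no edge from a vertex of $V_\cJ$ to one of $V_\cI$. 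Thus the two nonempty vertex sets $V_\cI$ and $V_\cJ$ are joined by no edge, and $\Gamma(\cE,B)$ is disconnected.

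I expect the main obstacle to be precisely the third paragraph: turning an abstract decomposition of $\cE$ into a direct sum of ideals into one compatible with the given natural basis. The key device is the identity $u_ie_j=\lambda_{ij}e_j^2$ together with the non-vanishing $e_j^2\ne0$ supplied by $\ann(\cE)=0$, which pins each $u_i$ (and each $v_i$) to the line $\FF e_i$; after that the argument is routine bookkeeping, and Example \ref{ex:connected_notconnected}, where $\ann(\cE)\ne0$, shows the hypothesis cannot be omitted.
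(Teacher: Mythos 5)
Your proof is correct, and its overall architecture matches the paper's: the easy implication (disconnected graph gives a decomposition) is identical, and for the hard implication you likewise split each basis vector $e_i=u_i+v_i$ along $\cE=\cI\oplus\cJ$ and extract $u_iu_j=v_iv_j=0$ for $i\ne j$. Where you genuinely diverge is in the crucial step of showing that the ideal decomposition is induced by a partition of $B$. The paper first proves that the nonzero projections $u_i$ are linearly independent (a dependence relation would force some $u_j$ to annihilate $\cI$, hence all of $\cE$), and then uses the dimension count $|I|+|J|=\dim\cI+\dim\cJ=n$ to conclude that the two index sets are disjoint, so that each $e_i$ lies entirely in $\cI$ or in $\cJ$. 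You bypass both the independence argument and the counting: writing $u_i=\sum_k\lambda_{ik}e_k$, the identity $u_ie_j=\lambda_{ij}e_j^2$ for $j\ne i$, combined with $u_ie_j=u_iu_j+u_iv_j=0$ and with $e_j^2\ne0$ (which is exactly what Lemma \ref{le:annihilator} and $\ann(\cE)=0$ give), kills every off-diagonal coefficient at once and pins $u_i$ to the line $\FF e_i$. Your route is somewhat more direct and makes the use of the hypothesis $\ann(\cE)=0$ completely localized and transparent; the paper's route is a touch longer but packages the same information in a form (a natural basis of each ideal) that it reuses verbatim in the subsequent proposition on arbitrary evolution algebras. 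Both arguments are sound and of comparable depth.
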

\begin{proof}
Let us prove that $\cE$ is decomposable if and only if  $\Gamma(\cE,B)$ is not connected.

If $\cE$ is decomposable: $\cE=\cI\oplus\cJ$ for nonzero ideals $\cI$ and $\cJ$, then for each $i=1,\ldots,n$, $e_i = e_i' + e_i''$, with  $e_i'\in\cI$ and $e_i''\in \cJ$.  Thus $e_i^2 = e_i'^2  + e_i ''^2$ and $ \sum_{j=1}^n \alpha_{ij}e_j = \sum_{j=1}^n \alpha_{ij}(e_j' + e_j'') = \sum_{j=1}^n \alpha_{ij}e_j' + \sum_{j=1}^n \alpha_{ij}e_j''$.
Therefore, for  every $i=1,\ldots,n$, $e_i'^2 = \sum_{j=1}^n \alpha_{ij}e_j'$ and $e_i''^2 = \sum_{j=1}^n \alpha_{ij}e_j''$.

Moreover, for every $i \neq j$, $e_i  e_j = 0$ and hence $e_i'  e_j' = 0 $ and $e_i''  e_j'' = 0$ too.

The set $\{ e_i' :  1 \leq i \leq n \}$ spans $\cI$ and  the set $\{ e_i'' : 1 \leq i \leq n \}$ spans $\cJ$.
If for some $ 1 \leq j \leq n$,  $e_j' \in \FF e_1' + \cdots + \FF e_{j-1}' $ (for $j=1$ this means $e_1'=0$) then $e_j'^2 \in e_j'(\FF e_i' + \cdots + \FF e_{j-1}') = 0$. Moreover,
$e_j'  e_h' = 0$ for all $j \neq h$, so $e_j' \cI = 0$. But $e_j'\cJ\subseteq\cI\cJ=0$, so $e_j' \cE = 0$ and $e_j'\in\ann(\cE)=0$.

Define $ I \bydef \{i : 1\leq i \leq n, \; e_i' \neq 0  \} $ and $ J \bydef \{ j : 1\leq j \leq n, \; e_j'' \neq 0 \}$. Clearly $\{1,\ldots,n\}=I\cup J$. The argument above shows that $\{ e_i' : i \in I\}$ and $\{ e_j'' : j \in J\}$ are $\FF$-bases of $\cI$  and
$\cJ$ respectively. Moreover, $|I| + |J| = \dim(\cI) + \dim(\cJ) = n$, and this forces $I$ and $J$ to be disjoint. Hence $B=B_\cI\cup B_\cJ$ (disjoint union), where $B_\cI=\{e_i: i\in I\}$ is a (natural) basis of $\cI$ and $B_\cJ=\{e_i:i\in J\}$ is a (natural) basis of $\cJ$. The vertices of $\Gamma(\cE,B)$ in $I$ are not connected with the vertices in $J$, because $\cI^2\subseteq \cI$ and $\cJ^2\subseteq \cJ$, and hence $\Gamma(\cE,B)$ is not connected.

Conversely, suppose that  $\Gamma(\cE, B)$ is not connected. Then there exists a partition $\{1, \cdots,n\} = I \cup J $ (disjoint union) such that there is no edge connecting an element in $I$ with an element in $J$.  Then $\cI\bydef \espan{e_i :  i  \in I}$ and $ \cJ = \espan{e_j :  j  \in J}$ are ideals of
$\cE$ and $\cE = \cI \oplus \cJ$. Therefore, $\cE$ is decomposable.
\end{proof}

\smallskip

In general, an ideal of an evolution algebra is not an evolution algebra itself. However, the arguments in the proof above show that quotients behave nicely.

\begin{lemma}\label{le:quotient}
If $\cE$ is an evolution algebra and $\cI$ is a proper ideal of $\cE$, then $\cE / \cI$ is an evolution algebra.
\end{lemma}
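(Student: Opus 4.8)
The plan is to manufacture a natural basis of $\cE/\cI$ out of the images of a natural basis of $\cE$. Fix a natural basis $B=\{e_1,\ldots,e_n\}$ of $\cE$ and write $\bar x\bydef x+\cI$ for the canonical image of $x\in\cE$ in the quotient. Since $B$ spans $\cE$, the family $\{\bar e_1,\ldots,\bar e_n\}$ spans $\cE/\cI$; hence from this finite spanning family one may sift out a subset $\{\bar e_i:i\in S\}$, with $S\subseteq\{1,\ldots,n\}$, which is a basis of $\cE/\cI$. (Properness of $\cI$ is only used here, to guarantee $\cE/\cI\ne 0$; even the empty basis would otherwise do.)

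The remaining point is that this basis is a \emph{natural} basis. Because $\cE$ is commutative and $e_ie_j=0$ for $i<j$, in fact $e_ie_j=0$ for all $i\ne j$. Therefore, for any two distinct indices $i,j\in S$ one has $\bar e_i\,\bar e_j=\overline{e_ie_j}=\bar 0$. Choosing any ordering of $S$, the defining condition of Definition~\ref{df:evolution} holds for $\{\bar e_i:i\in S\}$, so it is a natural basis and $\cE/\cI$ is an evolution algebra.

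There is essentially no genuine obstacle: the statement reduces to the elementary fact that a basis can be extracted from a finite spanning set, together with the observation that the vanishing of products of distinct basis vectors passes to quotients. This is exactly the type of sifting argument already used in the proof of Proposition~\ref{pr:indecomposable_connected}, where a natural basis of an ideal was extracted from a spanning family built from the original $e_i$'s. If one prefers a matrix viewpoint: on the level of the matrix of structural constants, passing to $\cE/\cI$ amounts to deleting and linearly combining rows and columns, and one simply retains a maximal set of rows whose images remain independent; the resulting submatrix is the matrix of structural constants of $\cE/\cI$ in the natural basis produced above.
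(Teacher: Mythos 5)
Your proof is correct and follows essentially the same route as the paper: take the images of a natural basis of $\cE$ in the quotient, extract a linearly independent spanning subset, and observe that the vanishing of products of distinct basis elements passes to $\cE/\cI$. The additional remarks on properness and the matrix viewpoint are harmless elaborations of the same argument.
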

\begin{proof}
Take a natural basis $B= \{e_1, \cdots, e_n\}$ of $\cE$. Then the set of coclasses module $\cI$ of elements in $B$: $B' = \{e_i' = e_i + \cI : i, \; 1 \leq  i \leq n \}$,
spans $\cE / \cI$, and $e_i' e_j' = 0$ for $ i \neq j.$ Any basis of $\cE / \cI$ contained in $B'$
is then a natural basis of $\cE / \cI$.
\end{proof}

(Note that in the proof above, the elements of $B'$ not in the chosen basis, are necessarily in $\ann(\cE/\cI)$, because the product of any of these elements by any element of the basis is trivial.)

\smallskip

In case the annihilator of an evolution algebra is not trivial, Proposition \ref{pr:indecomposable_connected} is no longer valid. Actually, as shown by Example \ref{ex:connected_notconnected}, the property of the graph being connected depends on the chosen natural basis. The right result in this case is the following proposition.

\begin{proposition}
Let $\cE$ be an evolution algebra.
Then $\cE$ is indecomposable if and only if the graph $\Gamma(\cE,B)$ is connected for any natural basis $B$.
\end{proposition}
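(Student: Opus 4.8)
The plan is to prove the contrapositive equivalence: $\cE$ is decomposable if and only if $\Gamma(\cE,B)$ fails to be connected for \emph{some} natural basis $B$. One implication is essentially already available: the last paragraph of the proof of Proposition \ref{pr:indecomposable_connected} shows that if $\Gamma(\cE,B)$ is not connected, say with a vertex partition $\{1,\ldots,n\}=I\cup J$ admitting no edge across it, then $\espan{e_i:i\in I}$ and $\espan{e_j:j\in J}$ are ideals whose direct sum is $\cE$; that argument uses nothing about $\ann(\cE)$, so I would simply invoke it. The work therefore lies in the reverse direction.

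So suppose $\cE=\cI\oplus\cJ$ with $\cI,\cJ$ nonzero ideals; I want to exhibit one natural basis whose graph is disconnected. The idea is to obtain a natural basis of $\cE$ by concatenating natural bases of $\cI$ and of $\cJ$, so that edges cannot cross between the two blocks. For this I first observe that, although an arbitrary ideal of an evolution algebra need not be an evolution algebra, a direct-summand ideal is one: the canonical projection $\pi\colon\cE\to\cE/\cJ$ is an algebra homomorphism with kernel $\cJ$, and since $\cI\cap\cJ=0$ and $\cI+\cJ=\cE$ its restriction $\pi|_\cI$ is an algebra isomorphism $\cI\xrightarrow{\sim}\cE/\cJ$; by Lemma \ref{le:quotient} the target is an evolution algebra, hence so is $\cI$, and symmetrically $\cJ\cong\cE/\cI$ is an evolution algebra. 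Choosing natural bases $B_\cI=\{f_1,\ldots,f_k\}$ of $\cI$ and $B_\cJ=\{f_{k+1},\ldots,f_n\}$ of $\cJ$, I would check that $B=B_\cI\cup B_\cJ$ is a natural basis of $\cE$: a product of two distinct elements from the same block vanishes by the natural-basis property together with the commutativity of $\cE$, while a product of one element from each block lies in $\cI\cap\cJ=0$. Finally, since $f_a^2\in\cI$ for $a\le k$ and $f_a^2\in\cJ$ for $a>k$, the graph $\Gamma(\cE,B)$ has no edge between $\{1,\ldots,k\}$ and $\{k+1,\ldots,n\}$, and both sets are nonempty because $\cI,\cJ\ne 0$; thus $\Gamma(\cE,B)$ is not connected.

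The only step that is not pure bookkeeping is the realization that a direct-summand ideal carries an evolution-algebra structure. A naive attempt to mimic Proposition \ref{pr:indecomposable_connected} by splitting a fixed natural basis as $e_i=e_i'+e_i''$ and using the $e_i'$, $e_i''$ as bases of $\cI$, $\cJ$ breaks down precisely when $\ann(\cE)\ne 0$, since then the candidate vectors may be linearly dependent without vanishing (this is exactly what Example \ref{ex:connected_notconnected} illustrates). Passing to the quotient via Lemma \ref{le:quotient} is what circumvents this difficulty, and once that is in hand the remaining verifications are routine.
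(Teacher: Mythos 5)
Your argument is correct and follows essentially the same route as the paper: both use Lemma \ref{le:quotient} to realize the direct summands $\cI\cong\cE/\cJ$ and $\cJ\cong\cE/\cI$ as evolution algebras, concatenate natural bases of the summands to obtain a natural basis of $\cE$ whose graph is disconnected, and handle the converse by the argument already given in Proposition \ref{pr:indecomposable_connected}. Your explicit remark on why the naive splitting $e_i=e_i'+e_i''$ fails when $\ann(\cE)\ne 0$ is a nice addition but does not change the substance of the proof.
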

\begin{proof}
Let us prove that $\cE$ is decomposable if and only if  there is a natural basis of $\cE$ such that $\Gamma(\cE, B)$ is not connected.

If $\cE$ is decomposable then there are non zero ideals $\cI, \cJ$ such that $\cE = \cI \oplus \cJ.$ Then $\cI \cong
\cE / \cJ$ and $\cJ \cong \cE / \cI$ are evolution algebras by Lemma \ref{le:quotient}. Take $B_{\cI}$ a natural basis of $\cI$ and
$B_{\cJ}$ a natural basis of $\cJ$. Then $B = B_{\cI} \cup B_{\cJ}$ is a natural basis of $\cE$ and  $\Gamma(\cE, B)$ is not connected, because there is no edge connecting nodes corresponding to $B_{\cI}$ with nodes corresponding to  $ B_{\cJ}$. The converse is proven as in Proposition \ref{pr:indecomposable_connected}.
\end{proof}

%---------------------------------
\bigskip

\section{Nilpotency}\label{se:nilpotency}

The goal of this section is to reprove and extend \cite[Theorem 2.7]{CLOR1} on the nilpotency of evolution algebras. Let us recall first the definitions.

\begin{df}
An element $x$ of an evolution algebra $\cE$  is called \emph{nil} if
there is a natural number $n$ such that  $(\cdot
\underbrace{((x\cdot x)\cdot x)\cdots x}_{n})=0$.
The algebra $\cE$ is said to be \emph{nil} if every  element of the algebra
is nil.
\end{df}

Given an evolution algebra, we introduce the following sequences of subspaces:
\begin{align*}
\cE^{<1>} & = \cE,&  \cE^{<k+1>} & = \cE^{<k>}\cE;\\
\cE^1 & = \cE, & \cE^{k+1} &=\sum_{i=1}^{k}\cE^i\cE^{k+1-i}.
\end{align*}

\begin{df} An algebra $\cE$ is called
\begin{romanenumerate}
\item \emph{right nilpotent} if there exists $n\in \NN$ such that $\cE^{<n>} = 0$, and the minimal such number is called the \emph{index of right nilpotency};
\item \emph{nilpotent} if there exists $n\in \NN$ such that $\cE^n = 0$, and the minimal such number is called the \emph{index of nilpotency}.
\end{romanenumerate}
\end{df}

\begin{remark}
A commutative algebra is right nilpotent if and only if it is nilpotent (see \cite[Chapter 4, Proposition 1]{ZSSS}). This applies, in particular, to evolution algebras.
\end{remark}

\medskip

\begin{theorem}\label{th:nilpotency}
Let $\cE$ be an evolution algebra and let  $ B = \{e_1, \cdots, e_n\}$ be a natural basis of $\cE$. Then the following condition are equivalent:
\begin{enumerate}
\item $\cE$ is nil.
\item There are no oriented cycles in $\Gamma(\cE, B)$.
\item The basis $B$ can be reordered so that the matrix of structural constants $A$ is strictly upper triangular.
\item $\cE$ is nilpotent.
\end{enumerate}
\end{theorem}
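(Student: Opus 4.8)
The plan is to prove the cycle of implications $(1)\Rightarrow(2)\Rightarrow(3)\Rightarrow(4)\Rightarrow(1)$, with the bulk of the work concentrated in the combinatorial step $(2)\Rightarrow(3)$.

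\textbf{Proof sketch.} For $(4)\Rightarrow(1)$: if $\cE^n=0$ for some $n$, then in particular the product of any $n$ copies of a single element $x$, associated in any way, is zero; since the left-normed product $(\cdots((x\cdot x)\cdot x)\cdots x)$ with $n$ factors lies in $\cE^n$, every $x$ is nil, so $\cE$ is nil. For $(1)\Rightarrow(2)$: suppose $\Gamma(\cE,B)$ contains an oriented cycle $i_1\to i_2\to\cdots\to i_k\to i_1$; I would try to exhibit a non-nil element. The natural candidate is a suitable element supported on the vertices of the cycle — for instance, when the cycle is a single loop at vertex $i$ (so $\alpha_{ii}\ne 0$), the element $e_i$ satisfies $e_i^2=\alpha_{ii}e_i+(\text{other terms})$ and iterating keeps a nonzero $e_i$-component, so $e_i$ is not nil; for a longer cycle one argues that the left-normed powers of $e_{i_1}$ always retain a nonzero component along the cycle (the coefficient of $e_{i_1}$ after $k$ squarings picks up the nonzero product $\alpha_{i_1i_2}\alpha_{i_2i_3}\cdots\alpha_{i_ki_1}$ times earlier coefficients, and one checks it never vanishes by following the nonzero ``mass'' around the cycle). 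This is the first place where a little care is needed, but it is standard.

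\textbf{The combinatorial core, $(2)\Rightarrow(3)$.} This is a purely graph-theoretic fact: a finite directed graph with no oriented cycle admits a topological ordering of its vertices, i.e. one can relabel $V=\{1,\dots,n\}$ so that every edge $(i,j)$ has $i<j$. I would prove this by a standard argument: in a finite acyclic digraph there is a vertex with no outgoing edges (otherwise, following outgoing edges indefinitely forces a repeated vertex, hence a cycle); make that vertex $n$, delete it, and induct. Note that ``no oriented cycle'' here must be read as including no loops, so a vertex with an edge to itself is forbidden — this is exactly what makes the ordering \emph{strict}. Translating back: reordering $B$ according to this topological order, $\alpha_{ij}\ne 0$ forces $i<j$, which is precisely the statement that $A$ is strictly upper triangular.

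\textbf{Finishing, $(3)\Rightarrow(4)$.} Assume $B$ is ordered so that $A$ is strictly upper triangular, i.e. $e_i^2\in\espan{e_{i+1},\dots,e_n}$ for all $i$ (and $e_n^2=0$). The idea is to show $\cE^{2^{n-1}+1}=0$, or more cleanly to set up a filtration: let $\cE_{k}=\espan{e_k,e_{k+1},\dots,e_n}$, so $\cE=\cE_1\supseteq\cE_2\supseteq\cdots\supseteq\cE_n\supseteq\cE_{n+1}=0$, and observe that $\cE\cE_k\subseteq \cE_{k+1}$: indeed for $i\ge k$ we have $e_i e_j=0$ unless $i=j$, and $e_i^2\in\espan{e_{i+1},\dots}\subseteq\cE_{i+1}\subseteq\cE_{k+1}$, while $e_i e_j$ with $i<k\le j$ is either zero (if $i\ne j$, impossible here) — so products of $e_i$ ($i<k$) with $e_j$ ($j\ge k$) also vanish. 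Hence multiplying by $\cE$ drops the filtration index by at least one, so $\cE^{n+1}\subseteq\underbrace{\cE\cdots\cE}_{}\subseteq\cE_{n+1}=0$; more precisely one checks by induction that $\cE^{m}\subseteq\cE_{m}$ for all $m$, giving $\cE^{n+1}=0$ and so $\cE$ is nilpotent. (One may prefer to phrase this via the right-nilpotent series and invoke the quoted fact that commutative right nilpotent implies nilpotent; either route works.)

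\textbf{Main obstacle.} The only genuinely delicate point is $(1)\Rightarrow(2)$ — producing an explicit non-nil element from an oriented cycle and verifying that the relevant coefficient in the left-normed powers never becomes zero over an arbitrary field (so one cannot rely on positivity; instead one tracks that a single nonzero coefficient is propagated around the cycle, multiplied each time by a nonzero structure constant). Everything else is either the well-known topological-sort argument or a routine filtration computation.
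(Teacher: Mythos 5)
Your cycle of implications matches the paper's, and $(4)\Rightarrow(1)$, $(2)\Rightarrow(3)$ (the topological-sort/repeated-sink argument) are exactly what the paper does. But the step you yourself flag as the delicate one, $(1)\Rightarrow(2)$, is where your sketch genuinely breaks: the candidate element $e_{i_1}$ does \emph{not} work for a cycle of length $k\geq 2$. The left-normed powers used in the definition of nil are $x^{[m+1]}=x^{[m]}\cdot x$, and since $e_{i_2}e_{i_1}=0$ for $i_2\neq i_1$, one gets $e_{i_1}^{[3]}=(e_{i_1}^2)e_{i_1}=\alpha_{i_1i_1}e_{i_1}^2$, which is $0$ whenever there is no loop at $i_1$. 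Concretely, for $e_1^2=e_2$, $e_2^2=e_1$ (a $2$-cycle), the element $e_1$ is nil: $e_1^{[3]}=e_2e_1=0$. So no single basis vector on the cycle witnesses non-nilness; your ``nonzero mass propagated around the cycle'' dies at the first multiplication because distinct basis vectors annihilate each other. The paper's fix is to take a cycle of \emph{minimal} length $r$ (so that, after reordering, there are no other arrows among the vertices $1,\ldots,r$) and the element $x=e_1+\cdots+e_r$; then modulo $\cN=\espan{e_{r+1},\ldots,e_n}$ one computes $x^{[r+1]}\equiv(\alpha_1\cdots\alpha_r)x$, and every intermediate power has all $r$ coefficients equal to nonzero products of the $\alpha_i$, so $x$ is not nil. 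In the $2$-cycle example, $x=e_1+e_2$ satisfies $x^2=x$. Both ingredients — summing over the cycle, and minimality to prevent cross-terms (and possible cancellation over an arbitrary field) — are missing from your sketch and are exactly the content of this implication.

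There is also a smaller error in $(3)\Rightarrow(4)$: the claim that $\cE^m\subseteq\cE_m$ for the ideal powers $\cE^{k+1}=\sum_{i=1}^k\cE^i\cE^{k+1-i}$, hence $\cE^{n+1}=0$, is false. The filtration inclusion $\cE\cE_k\subseteq\cE_{k+1}$ only controls products with one factor equal to all of $\cE$; a term like $\cE^2\cE^2\subseteq\cE_2\cE_2$ lands only in $\cE_3$, not $\cE_4$. For $n=3$ with $e_1^2=e_2$, $e_2^2=e_3$, $e_3^2=0$ one has $\cE^4=\FF e_3\neq 0$ (the index of nilpotency is $5$, not $4$). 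Your fallback route is the correct one and is what the paper does: the filtration gives right nilpotency, $\cE^{<k>}\subseteq\sum_{j\geq k}\FF e_j$, so $\cE^{<n+1>}=0$, and one then invokes the cited fact that a commutative right nilpotent algebra is nilpotent.
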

\begin{proof}

$(4)\Longrightarrow  (1)$ is trivial.

$(1)\Longrightarrow  (2)$: Suppose, on the contrary, that $\Gamma(\cE, B)$ contains oriented cycles and choose one of minimal
length (this length may be $1$). Let $r$ be this minimal length. Reorder the basic elements so that this cycle is
\[
\begin{tikzpicture}%
  %[->,auto=left,thick,every node/.style={circle,draw}]
[->,>=stealth',shorten >=1pt,auto,thick,main node/.style={circle,fill=blue!20,draw}]

  \node[main node] (n1) at (4,4)  {\Large{$1$}};
  \node [main node](n2) at (6,5)  {\Large{$2$}};
    \node [main node](n3) at (8,5)  {\Large{$3$}};
    \node (n4) at (10.5,4)  {\Huge{\phantom{$4444$}}};
      \node [main node](nr) at (8,3)  {\tiny{$r-1$}};
  \node [main node] (nrr) at (6,3)  {\Large{$r$}};

   \path[every node/.style={font=\sffamily\small}]

           (n1) edge node [above] {$\alpha_1$} (n2)
                    % (n1) edge node  (n2)
     (n2) edge node [above] {$\alpha_2$} (n3)
         (n3) edge [dashed] node [above] {} (n4)
              (nrr) edge node [above] {$\alpha_r$}  (n1)
                     (nr) edge node [above] {$\alpha_{r-1}$}  (nrr)
	(n4) edge [dashed] node {} (nr);

\end{tikzpicture}
\]

\medskip
By minimality there is no other arrow connecting the nodes $1, \cdots, r$. Therefore, if $\cN = \FF e_{r+1} \oplus \cdots \oplus \FF e_n$, then
\[
\begin{split}
e_1^2&= \alpha_1 e_2 + u_1, \quad u_1 \in \cN,\\
e_2^2&= \alpha_2 e_3 + u_2, \quad u_2 \in \cN,\\
&\vdots\\
e_{r}^2 &= \alpha_r e_1 + u_r, \quad u_{r} \in \cN.
\end{split}
\]
Take the element $x = e_1 + \cdots + e_{r}$. Write $u \equiv v$ if and only if $ u-v \in \cN$.  Then $x^{r+1} \equiv (\alpha_1\alpha_2 \cdots \alpha_r)x$,
so $x$ is not nilpotent and $\cE$ is not nil.

$(2)\Longrightarrow  (3)$: Since there are no oriented cycles, there is a sink (i.e., a node with no arrow leaving from it). If this
node is $i$,  this means $e_i^2 = 0$. Reordering $B$ we may assume that $ i = n$, so $ e_n^2 = 0$.  Now the subgraph consisting of
the nodes $ 1, \ldots, n-1$ and the arrows connecting them contains no oriented cycle, so we may assume now that the node $n-1$ is a sink.
Therefore, $e_{n-1}^2 \in \FF e_n$.  Continuing in this way,  we may reorder our natural basis $B$ so that  $e_{n-2}^2 \in \FF e_{n-1} + \FF e_n$, ...
Eventually we reorder the basis so that $e_i^2 \in \sum_{j>i}\FF e_j$ for any $i$, and hence $A$ is strictly upper triangular.

$(3)\Longrightarrow  (4)$: If $A$ is strictly upper triangular, then $\cE^{<k>} \subseteq \sum_{j=k}^n\FF e_j$. In particular, $\cE^{<n + 1>} = 0$.
\end{proof}

\begin{remark}\label{re:nilpotency_graph}
Nilpotency can be read from the graph!!
\end{remark}

The equivalence of (1), (3) and (4) appears in \cite[Theorem 2.7]{CLOR1} (see also \cite{TZ}). Note that the use of  $\Gamma(\cE, B)$ simplifies
and sheds new light on the proof.

%---------------------------------

\bigskip

\section{Automorphisms}\label{se:automorphisms}

This last section is devoted to prove that the group of automorphisms of any evolution algebra with $\cE=\cE^2$ is finite. This happens, in particular, if $\cE$ is simple or a direct sum of simple ones, and it shows how rigid these algebras are.

Automorphisms of evolution algebras have been considered too in \cite{CGOT}.

First, note that if we drop the condition $\cE=\cE^2$, then $\Aut(\cE)$ may be infinite, even if $\ann(\cE)=0$, as shown by the next example.

\begin{example}
Let $\cE$ be the evolution algebra over the field of real numbers with natural basis $B=\{e_1,e_2,e_3\}$ and attached graph
\[
\begin{tikzpicture}%
  %[->,auto=left,thick,every node/.style={circle,draw}]
[->,>=stealth',shorten >=1pt,auto,thick,every node/.style={circle,fill=blue!20,draw}]

  \node (n1) at (2,4)  {1};
  \node (n2) at (6,4)  {2};
  \node (n3) at (4,2)  {3};

   \path[every node/.style={font=\sffamily\small}]

           (n1) edge node [above] {$1$} (n3)
       (n2) edge node [above] {$1$} (n3)
       (n3) edge [loop below] node[right] {$1$} (n3);

\end{tikzpicture}
\]
A straightforward computation shows that a linear automorphism $\varphi\in \GL(\cE)$ is an automorphism if and only if it fixes $e_3$ and satisfies $e_1\mapsto \alpha e_1+\beta e_2$, $e_2\mapsto\gamma e_1+\delta e_2$, with $\left(\begin{smallmatrix} \alpha&\beta\\ \gamma &\delta\end{smallmatrix}\right)$ an orthogonal matrix.  Hence the group of automorphisms $\Aut(\cE)$ is infinite. Lemma \ref{le:annihilator} shows  $\ann(\cE)=0$.
\end{example}

\begin{proposition}
Let $\cE$ be an evolution algebra and let $A$ be its matrix of structural constants relative to a natural basis $B=\{e_1,\ldots,e_n\}$. Then $\cE=\cE^2$ if and only if $A$ is regular ($\det(A) \neq 0$).
\end{proposition}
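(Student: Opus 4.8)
The plan is to translate the condition $\cE = \cE^2$ into a statement about the image of the linear map determined by the matrix $A$, and then read off regularity. First I would observe that $\cE^2$ is spanned by all products $e_i e_j$, and since $e_i e_j = 0$ for $i \neq j$ by the natural basis property, $\cE^2 = \espan{e_i^2 : 1 \leq i \leq n}$. Now $e_i^2 = \sum_{j=1}^n \alpha_{ij} e_j$, so if we identify $\cE$ with $\FF^n$ via the basis $B$ (writing vectors as rows), the element $e_i^2$ corresponds precisely to the $i$-th row of $A$. Therefore $\cE^2$ is the row space of $A$, and $\dim \cE^2 = \mathrm{rank}(A)$.

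From here the equivalence is immediate: $\cE = \cE^2$ means $\dim \cE^2 = n$, i.e. $\mathrm{rank}(A) = n$, i.e. $\det(A) \neq 0$; conversely if $\det(A) \neq 0$ then the rows of $A$ are linearly independent, so the $e_i^2$ span an $n$-dimensional subspace, forcing $\cE^2 = \cE$. I would write this out in one short paragraph, being a little careful to state the identification cleanly (the coordinates of $e_i^2$ in the basis $B$ are the entries of the $i$-th row of $A$).

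There is no real obstacle here; the only thing to be mildly careful about is the direction of the identification (rows versus columns), and the fact that commutativity plus the natural basis relations $e_ie_j=0$ for $i\neq j$ really do reduce $\cE^2$ to the span of the squares $e_i^2$ — no cross terms survive, and there is no issue about needing $i<j$ versus $i>j$ since $e_ie_j=e_je_i$. One could phrase the whole thing even more slickly: $A$, viewed as a linear endomorphism of $\FF^n$ sending the standard basis vector $\varepsilon_i$ to the coordinate vector of $e_i^2$, has image equal to (the coordinate version of) $\cE^2$, so $\cE=\cE^2 \iff A$ is surjective $\iff \det A \neq 0$. I expect the author's proof to be essentially this one-line argument, possibly stated as: $\cE^2 = \espan{e_i^2 : i} = \espan{\sum_j \alpha_{ij} e_j : i}$, whose dimension is the rank of $A$.
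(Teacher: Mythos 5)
Your argument is correct and is essentially identical to the paper's: the author likewise notes that $e_1^2,\ldots,e_n^2$ span $\cE^2$ and that $A$ is regular exactly when these squares are linearly independent, which happens exactly when $\cE=\cE^2$. No differences worth noting.
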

\begin{proof}
For any $i=1,\ldots,n$,
$e_i^2 =  \sum_{j=1}^n \alpha_{ij} e_j$, so $A$ is regular if and only if $e_1^2, \ldots, e_n^2$ are linearly independent, and this happens if and only if
$\cE = \cE^2$. (Note that $e_1^2,\ldots,e_n^2$ span $\cE^2$.)
\end{proof}

Let $B=\{e_1, \cdots, e_n\}$ be a natural basis of an evolution algebra $\cE$.  For any element $ x = \alpha_1 e_1 + \cdots  + \alpha_n e_n $ define its \emph{support} (relative to $B$) by
$\supp(x) \bydef \{i : \alpha_i \neq 0\}$.

\begin{lemma}
Let $B=\{e_1, \cdots, e_n\}$ be a natural basis of an evolution algebra $\cE$ satisfying $\cE=\cE^2$, and let $ 0 \neq x, y \in \cE $ be two nonzero elements such that $xy = 0$. Then $\supp(x) \cap \supp(y) = \emptyset$.
\end{lemma}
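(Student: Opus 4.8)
The plan is to expand both elements in the natural basis and exploit orthogonality of the $e_i$. Write $x = \sum_{i=1}^n \alpha_i e_i$ and $y = \sum_{j=1}^n \beta_j e_j$. Since $B$ is a natural basis, $e_i e_j = 0$ whenever $i \ne j$, so when we expand the product $xy$ bilinearly every cross term vanishes and only the diagonal terms survive:
\[
xy = \sum_{i=1}^n \alpha_i \beta_i\, e_i^2 .
\]
This is the one computation in the argument, and it is immediate.

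Next I would invoke the hypothesis $\cE = \cE^2$ through the Proposition proved just above: $\cE = \cE^2$ is equivalent to $A$ being regular, which in turn is equivalent to $e_1^2, \ldots, e_n^2$ being linearly independent in $\cE$. Applying this to the relation $0 = xy = \sum_i \alpha_i \beta_i e_i^2$ forces every coefficient to vanish: $\alpha_i \beta_i = 0$ for all $i = 1, \ldots, n$. Hence, for each index $i$, at least one of $\alpha_i$, $\beta_i$ is zero, i.e.\ $i \notin \supp(x)$ or $i \notin \supp(y)$. This says precisely that $\supp(x) \cap \supp(y) = \emptyset$, which is the claim. (The nonvanishing of $x$ and $y$ is not even needed for this conclusion, though of course it is what makes the statement have content.)

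I do not expect a genuine obstacle here; the lemma is essentially a reformulation of the linear independence of the squares $e_i^2$. The only point requiring care is making sure to cite the correct equivalence from the preceding Proposition ($\cE = \cE^2 \iff e_1^2,\ldots,e_n^2$ linearly independent) rather than trying to argue the disjointness of supports directly from $xy = 0$, which would not be possible without the $\cE = \cE^2$ hypothesis — indeed Example~\ref{ex:2bases_graph} already exhibits a (non-regular) evolution algebra where $(e_1+e_2)(e_1-e_2) = 0$ with overlapping supports.
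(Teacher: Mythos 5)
Your proof is correct and follows exactly the same route as the paper's: expand $x$ and $y$ in the natural basis so that $0 = xy = \sum_i \alpha_i\beta_i e_i^2$, then use the linear independence of $e_1^2,\ldots,e_n^2$ (equivalent to $\cE=\cE^2$ by the preceding Proposition) to conclude $\alpha_i\beta_i=0$ for all $i$. Nothing to add.
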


\begin{proof}
Let $0 \neq x,y $ be elements in $\cE$. Then $x = \alpha_1 e_1 + \cdots  + \alpha_n e_n$, $y = \beta_1 e_1 + \cdots  + \beta_n e_n$ and $ 0 = xy =\alpha_1 \beta_1 e_1^2 + \cdots  + \alpha_n \beta_n e_n^2$. But $\cE=\cE^2$, so the elements $e_1^2,\ldots,e_n^2$ are linearly independent, and hence for any $i=1,\ldots,n$ we get $\alpha_i \beta_i = 0$. Hence for any $i$ either  $\alpha_i  = 0$ or $\beta_i = 0$ (or both) and $\supp(x) \cap \supp(y) = \emptyset$.
\end{proof}

Before proving our result on the finiteness of the automorphism group, we need the next easy result, which has its own independent interest.

\begin{theorem}
Let $\cE$ be an evolution algebra such that $\cE=\cE^2$, and let $B=\{e_1,\ldots,e_n\}$ and $B'=\{f_1,\ldots,f_n\}$ be two natural bases. Then
there exists a permutation $\sigma \in S_n$ such that for any $i=1,\ldots,n$, $f_i \in \FF^{\times} e_{\sigma(i)}$.
\end{theorem}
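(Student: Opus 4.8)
The plan is to exploit the lemma just proved: in an evolution algebra with $\cE=\cE^2$, if $xy=0$ with $x,y\neq 0$, then $\supp(x)\cap\supp(y)=\emptyset$ (relative to either natural basis). The two natural bases $B=\{e_1,\ldots,e_n\}$ and $B'=\{f_1,\ldots,f_n\}$ both give multiplication tables that vanish off the diagonal, so $f_if_j=0$ for $i\neq j$. The idea is to show each $f_i$ is supported (relative to $B$) at a single index, and that distinct $f_i$'s use distinct indices; this yields the permutation $\sigma$ and the scalars.

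First I would fix $i$ and look at $f_i$. For every $j\neq i$ we have $f_if_j=0$ with both factors nonzero, so by the lemma applied to the basis $B$, $\supp(f_i)\cap\supp(f_j)=\emptyset$. Thus the supports $\supp(f_1),\ldots,\supp(f_n)$ (relative to $B$) are pairwise disjoint subsets of $\{1,\ldots,n\}$. Since $B'$ is a basis, no $f_i$ is zero, so each $\supp(f_i)$ is nonempty. But $n$ pairwise disjoint nonempty subsets of an $n$-element set must each be singletons and together partition $\{1,\ldots,n\}$. Hence for each $i$ there is a unique index $\sigma(i)$ with $\supp(f_i)=\{\sigma(i)\}$, i.e. $f_i=\lambda_i e_{\sigma(i)}$ for some $\lambda_i\in\FF^\times$ (nonzero since $f_i\neq 0$), and $i\mapsto\sigma(i)$ is a bijection of $\{1,\ldots,n\}$, i.e. a permutation $\sigma\in S_n$. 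This gives exactly $f_i\in\FF^\times e_{\sigma(i)}$.

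There is really only one potential subtlety, which is whether the lemma truly applies in the form needed: the lemma was stated for two arbitrary nonzero elements $x,y$ with $xy=0$, and the conclusion $\supp(x)\cap\supp(y)=\emptyset$ is with respect to the ambient natural basis $B$ used to define support — so applying it to $x=f_i$, $y=f_j$ is legitimate, and no assumption that $f_i,f_j$ themselves form a natural basis is needed for that step. The only place $B'$ being a natural basis enters is in guaranteeing $f_if_j=0$ for $i\neq j$ and $f_i\neq 0$. So the main (minor) obstacle is just making sure the counting argument — $n$ pairwise disjoint nonempty subsets of $\{1,\ldots,n\}$ are forced to be singletons — is stated cleanly; everything else is immediate from the lemma. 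I would also remark that the conclusion shows $A$ and the structural-constant matrix $A'$ of $B'$ are related by $A'=\Lambda^{-1}P^{\top}APD$ for a permutation matrix $P$ and suitable diagonal matrices, though that is not required for the statement.
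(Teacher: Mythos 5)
Your argument is correct and coincides with the paper's own proof: both apply the preceding lemma to the products $f_if_j=0$ to obtain pairwise disjoint nonempty supports relative to $B$, and then the counting argument forces each support to be a singleton. The proposal merely spells out the counting step and the bijectivity of $\sigma$ in more detail than the paper does.
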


\begin{proof}
For any $i\ne j$, $f_i f_j = 0$, so that $\supp(f_i) \cap \supp(f_j) =  \emptyset$, where $\supp$ indicates the support relative to the natural basis $B$. Then, necessarily, $\supp(f_i)$ consists of a single element for any $i$, and the result follows.
\end{proof}

\begin{corollary}
Let $\cE$ be an evolution algebra such that $\cE=\cE^2$, then the isomorphism class of the graph $\Gamma(\cE,B)$ does not depend on the natural basis $B$.
\end{corollary}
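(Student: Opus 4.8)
The plan is to leverage the theorem just proved: any two natural bases of an evolution algebra $\cE$ with $\cE=\cE^2$ differ only by a permutation of the basis vectors together with a rescaling. Concretely, given natural bases $B=\{e_1,\ldots,e_n\}$ and $B'=\{f_1,\ldots,f_n\}$, there are $\sigma\in S_n$ and nonzero scalars $\lambda_1,\ldots,\lambda_n\in\FF^{\times}$ with $f_i=\lambda_i e_{\sigma(i)}$ for all $i$. I would then show that this same $\sigma$ is an isomorphism of directed graphs $\Gamma(\cE,B')\to\Gamma(\cE,B)$.

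First I would compute the matrix of structural constants $A'=(\alpha'_{ij})$ relative to $B'$ in terms of $A=(\alpha_{ij})$. Writing $e_j=\lambda_{\sigma^{-1}(j)}^{-1}f_{\sigma^{-1}(j)}$ one gets
\[
f_i^2=\lambda_i^2\,e_{\sigma(i)}^2=\lambda_i^2\sum_{j=1}^n\alpha_{\sigma(i),j}\,e_j=\sum_{k=1}^n\lambda_i^2\,\lambda_k^{-1}\,\alpha_{\sigma(i),\sigma(k)}\,f_k,
\]
so $\alpha'_{ik}=\lambda_i^2\lambda_k^{-1}\alpha_{\sigma(i),\sigma(k)}$. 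Since every $\lambda$ is nonzero, $\alpha'_{ik}\ne 0$ if and only if $\alpha_{\sigma(i),\sigma(k)}\ne 0$.

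Finally I would read off the conclusion. Writing $\Gamma(\cE,B)=(V,E)$ and $\Gamma(\cE,B')=(V,E')$ with $V=\{1,\ldots,n\}$, the equivalence above says precisely that $(i,k)\in E'$ if and only if $(\sigma(i),\sigma(k))\in E$; that is, $\sigma$ is an isomorphism of directed graphs, so the isomorphism class of $\Gamma(\cE,B)$ is independent of the natural basis $B$.

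I do not anticipate any genuine obstacle here; the only points to handle with care are the bookkeeping between $\sigma$ and $\sigma^{-1}$ when re-expanding the $e_j$ in terms of the $f_k$, and the remark that the rescaling factors $\lambda_i$ generally alter the weights $\omega$ on the edges while never creating or destroying an edge. Thus it is genuinely only the unweighted graph whose isomorphism class is basis-independent: the weighted graph $\Gamma^w(\cE,B)$ can change under a mere rescaling of the natural basis even when $\cE=\cE^2$.
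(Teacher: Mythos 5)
Your proof is correct and follows exactly the route the paper intends: the corollary is stated without proof precisely because it is immediate from the preceding theorem that any two natural bases differ by a permutation and nonzero rescalings, and your computation of $\alpha'_{ik}=\lambda_i^2\lambda_k^{-1}\alpha_{\sigma(i),\sigma(k)}$ correctly verifies that $\sigma$ is a graph isomorphism. Your closing remark that the weighted graph $\Gamma^w(\cE,B)$ need not be preserved is a worthwhile observation consistent with the paper's restriction of the corollary to the unweighted graph.
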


\begin{remark}
The condition $\cE=\cE^2$ is indeed necessary, as shown by Examples \ref{ex:2bases_graph} and \ref{ex:connected_notconnected}.
\end{remark}

\begin{corollary}\label{co:aut}
Let $\cE$ be an evolution algebra such that $\cE=\cE^2$ and let $B=\{e_1,\ldots,e_n\}$ be a natural basis. Then for any automorphism $\varphi\in\Aut(\cE)$ there is a permutation $\sigma\in S_n$ such that $\varphi(e_i)\in\FF^\times e_{\sigma(i)}$.
\end{corollary}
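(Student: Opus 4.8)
The plan is to obtain this as an immediate consequence of the preceding theorem, the key observation being that an algebra automorphism carries a natural basis to a natural basis. Concretely, given $\varphi\in\Aut(\cE)$, put $f_i\bydef\varphi(e_i)$ for $i=1,\ldots,n$. Since $\varphi$ is a linear bijection, $B'=\{f_1,\ldots,f_n\}$ is again a basis of $\cE$; and since $\varphi$ respects products, $f_if_j=\varphi(e_i)\varphi(e_j)=\varphi(e_ie_j)=\varphi(0)=0$ whenever $i\ne j$. Hence $B'$ is a natural basis of $\cE$.

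The second step is simply to apply the preceding theorem to the pair of natural bases $B$ and $B'$, which is legitimate because $\cE=\cE^2$ by hypothesis. That theorem yields a permutation $\sigma\in S_n$ with $f_i\in\FF^\times e_{\sigma(i)}$ for every $i$, and since $f_i=\varphi(e_i)$ this is exactly the desired conclusion $\varphi(e_i)\in\FF^\times e_{\sigma(i)}$.

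There is essentially no obstacle here: the only point worth a sentence is that the defining property of a natural basis ($e_ie_j=0$ for $i\ne j$) is preserved by $\varphi$, which is immediate. If one preferred a self-contained argument not citing the theorem, one could repeat its one-line proof in this situation: for $i\ne j$ the relation $f_if_j=0$ forces $\supp(f_i)\cap\supp(f_j)=\emptyset$ with respect to $B$ (by the lemma preceding that theorem), and since the $n$ sets $\supp(f_i)$ are pairwise disjoint, nonempty (each $f_i\ne 0$), and contained in $\{1,\ldots,n\}$, each $\supp(f_i)$ must be a singleton $\{\sigma(i)\}$; this defines the permutation $\sigma$ and exhibits the scalar relating $f_i$ to $e_{\sigma(i)}$.
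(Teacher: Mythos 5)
Your argument is correct and is exactly the intended one: the paper states this as an immediate corollary of the preceding theorem, the implicit point being that $\varphi$ sends the natural basis $B$ to another natural basis $\{\varphi(e_1),\ldots,\varphi(e_n)\}$, to which that theorem applies. Nothing further is needed.
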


\begin{theorem}
Let $\cE$ be an evolution algebra such that $\cE=\cE^2$. Then its group of automorphisms $\Aut(\cE)$ is finite.
\end{theorem}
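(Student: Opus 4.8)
The plan is to leverage Corollary \ref{co:aut}: every automorphism $\vphi$ of $\cE$ permutes the lines $\FF e_i$, so there is a group homomorphism $\pi\colon\Aut(\cE)\to S_n$ sending $\vphi$ to the permutation $\sigma$ with $\vphi(e_i)\in\FF^\times e_{\sigma(i)}$. Since $S_n$ is finite, it suffices to show that the kernel of $\pi$ is finite. An element of $\ker\pi$ is a diagonal automorphism: $\vphi(e_i)=d_i e_i$ for some $d_i\in\FF^\times$. So I would reduce the whole problem to showing there are only finitely many tuples $(d_1,\ldots,d_n)\in(\FF^\times)^n$ for which $e_i\mapsto d_ie_i$ is an automorphism.

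The constraint that $e_i\mapsto d_ie_i$ respects multiplication is, since $B$ is a natural basis, equivalent to the equations coming from $\vphi(e_i^2)=\vphi(e_i)^2$, i.e. $d_i\,e_i^2 = d_i^2\,e_i^2$ read off in the basis $B$: for every edge $(i,j)\in E$ (equivalently, every $(i,j)$ with $\alpha_{ij}\neq 0$) we need $d_i\alpha_{ij}=d_i^2\alpha_{ij}$, hence $d_j = d_i^2 d_j^{-1}$ after also using that $\vphi$ scales $e_j$ by $d_j$ — more precisely, comparing the $e_j$-coefficient on both sides gives $d_i\alpha_{ij} = d_i^2\alpha_{ij}$ is not quite it; rather $\vphi(e_i^2)=\sum_j\alpha_{ij}d_je_j$ while $\vphi(e_i)^2 = d_i^2 e_i^2 = \sum_j d_i^2\alpha_{ij}e_j$, so the condition is
\[
d_j = d_i^2 \qquad \text{whenever } \alpha_{ij}\neq 0 .
\]
Thus a diagonal map is an automorphism exactly when $d_j=d_i^2$ for every edge $(i,j)$ of $\Gamma(\cE,B)$. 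Now I use $\cE=\cE^2$: the matrix $A$ is regular, so it has no zero row, meaning every vertex $i$ has at least one outgoing edge. Hence for each $i$ there is some $j$ with $d_j=d_i^2$.

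To finish, I would show this forces each $d_i$ to be a root of unity, and in fact of bounded order, giving finiteness. Starting from any vertex $i_0$ and following outgoing edges $i_0\to i_1\to i_2\to\cdots$ (possible at every step since there are no sinks), we get $d_{i_{k+1}}=d_{i_k}^2$, so $d_{i_k}=d_{i_0}^{2^k}$. Since $V$ is finite, the walk revisits a vertex: there are $k<\ell$ with $i_k=i_\ell$, whence $d_{i_0}^{2^k}=d_{i_0}^{2^\ell}$, i.e. $d_{i_0}^{2^k(2^{\ell-k}-1)}=1$. So $d_{i_0}$ is a root of unity of order dividing $N:=\prod_{1\le a<b\le n+1}2^a(2^{b-a}-1)$ (a fixed integer depending only on $n$, or one can take the lcm of the finitely many possible exponents $2^k(2^m-1)$ with $k\le n$, $1\le m\le n$). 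Therefore every coordinate $d_i$ of an element of $\ker\pi$ lies in the finite set $\mu_N(\FF)$ of $N$-th roots of unity in $\FF$, so $\ker\pi$ is finite, and hence $\Aut(\cE)=\pi^{-1}(S_n)$ is finite as well.

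The only mildly delicate point — and the step I'd expect to be the "main obstacle," though it is really just bookkeeping — is making sure the cycle-chasing argument is airtight over an arbitrary field (in particular in positive characteristic, where one must be careful that "root of unity of bounded order" still means "lies in a finite set": this is fine because $X^N-1$ has at most $N$ roots in any field). One should also note the argument does not need the graph to be connected or the annihilator to vanish; regularity of $A$, i.e. the absence of sinks together with the independence of the $e_i^2$, is exactly what is used, and it enters only through guaranteeing every vertex has an outgoing edge so the forward walk never gets stuck.
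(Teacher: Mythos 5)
Your proof is correct and follows essentially the same route as the paper: reduce via Corollary \ref{co:aut} to the diagonal automorphisms, derive the edge condition $d_j=d_i^2$, and chase a walk in the graph until it closes up to force each $d_i$ to be a root of unity of bounded order. The only (immaterial) difference is that you walk forward along outgoing edges, using that the regular matrix $A$ has no zero row, whereas the paper walks backward along incoming edges, using that each $e_i\in\cE^2$ forces an edge $(j,i)$; both yield the same conclusion.
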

\begin{proof}
Fix a natural basis $B=\{e_1,\ldots,e_n\}$ of $\cE$, with matrix of structural constants $A=\bigl(\alpha_{ij}\bigr)$. By Corollary \ref{co:aut}, any automorphism of $\cE$ induces an automorphism of $\Gamma(\cE, B)$.
Let $\Phi: \Aut(\cE) \longrightarrow  \Aut(\Gamma(\cE, B))$ be the corresponding group automorphism. Moreover, $\Aut(\Gamma(\cE, B))$ is a finite group (up to isomorphism it is a subgroup of the symmetric group), so it is enough to prove that $\ker(\Phi)$ is finite. Notice that this kernel consists of the \emph{diagonal automorphisms}:
for any $\varphi \in \ker(\Phi)$ there are scalars $\mu_i \in \FF^\times$ such that   $\varphi(e_i) = \mu_i e_i$ for all $i$.

For such a diagonal linear map, the conditions on $\varphi$ to be an automorphism are given by $\varphi(e_i^2)= \varphi(e_i)^2$ for any $i$. But $\varphi(e_i)^2 = \mu_i^2  e_i^2 = \sum_{j} \mu_i^2 \alpha_{ij} e_j$. On the other hand, $\varphi(e_i^2) = \varphi( \sum_{j} \alpha_{ij} e_j) = \sum_{j} \alpha_{ij}\mu_j e_j$. Then $\varphi$ is an automorphism if and only if
$\mu_i^2 \alpha_{ij} = \mu_j \alpha_{ij}$ for all $i,j$ such that $\alpha_{ij}\ne 0$, or $\mu_j=\mu_i^2$ for any $i,j$ such that $\alpha_{ij}\ne 0$. This is equivalent to $\mu_j=\mu_i^2$ for any $(i,j)\in E$, so this is really a condition on the graph $\Gamma(\cE,B)=(V,E)$. We summarize this argument in the following group isomorphism:
\[
\ker(\Phi)\cong \{(\mu_1,\ldots,\mu_n)\in(\FF^\times)^n:   \mu_j = \mu_i^2\  \forall \; (i,j) \in E  \}.
\]
(The group on the right hand side is a subgroup of the $n$-dimensional torus $(\FF^\times)^n$.)

But $\cE = \cE^2$, so for any $i$, $e_i$ lies in $\cE^2$ and hence there exists an index $j$ such that $(j,i)\in E$. Therefore, for any  $i$, write $i_0=i$, and there is an index $i_1$ such that $(i_1,i_0)\in E$. Then there is an $i_2$ with $(i_2,i_1)\in E$, ...

Take the lowest integers $0\leq r<s<n$ such that $i_r=i_{s+1}$.
{\small
\[
\begin{tikzpicture}%
  %[->,auto=left,thick,every node/.style={circle,draw}]
[->,>=stealth',shorten >=1pt,auto,thick,main node/.style={circle,fill=blue!20,draw}]
  \node [main node] (n0) at (7,9)  {$i_0$};
	\node [main node] (n1) at (5,9)  {$i_1$};
	\node [main node] (n2) at (3,8)  {$i_2$};
	\node [main node] (nr) at (1,4)  {$i_r$};
	\node [main node] (ns) at (3.5,4.5)  {$i_s$};

	 \node (n3) at (1.5,6)  {\phantom{$i_1$}};
	 \node (n4) at (6,4)  {\phantom{$i_1$}};
	 \node (n5) at (6,2)  {\phantom{$i_1$}};
	 \node (n6) at (4,1)  {\phantom{$i_1$}};
	 \node (n7) at (2,2)  {\phantom{$i_1$}};

   \path[every node/.style={font=\sffamily\small}]

       (n1) edge  node {} (n0)
	 (n2) edge  node {} (n1)
	 (n7) edge  node {} (nr)
	 (nr) edge  node {} (ns)
	 (ns) edge  node {} (n4)

       (n3) edge [dashed] node  {} (n2)
	 (nr) edge [dashed] node  {} (n3)
	 (n4) edge [dashed] node  {} (n5)
	 (n5) edge [dashed] node  {} (n6)
	 (n6) edge [dashed] node  {} (n7);

\end{tikzpicture}
\]}

\medskip

In this situation, we get
\[
\mu_{i_r} = \mu_{i_{r+1}}^2 = \mu_{i_{r+2}}^{2^2} = \cdots = \mu_{i_{s+1}}^{2^{s+1-r}} = \mu_{i_{r}}^{2^{s+1-r}},
\]
so we have $\mu_{i_{r}}^{2^{s+1-r}-1} = 1$. But also
\[
\mu_i = \mu_{i_0} = \mu_{i_{1}}^2 = \cdots = \mu_{i_{r}}^{2^{r}}.
\]
Replacing this value in the above
expression, we get
\[
\mu_i^{2^{s+1-r}-1} =  (\mu_{i_{r}}^{2^r})^{2^{s+1-r}-1}  = (\mu_{i_{r}}^{2^{s+1-r}-1})^{2^r} = 1.
\]
Therefore, $\mu_i$ is a root of unity of order a divisor of $2^{s+1-r}-1.$
This implies that there is only a finite number of possibilities for each $\mu_i$. Hence $\ker(\Phi) $ is  finite and the Theorem follows.
\end{proof}

\begin{remark}
The proof above leads easily to an algorithmic procedure to determine the automorphism group of any evolution algebra with $\cE=\cE^2$.
\end{remark}

%------------------------------------
\bigskip

\end{document}